\DeclareMathOperator{\Expectation}{\mathbb E} 
\newcommand{\Ccs}[1]{C_c\left(#1\right)}
\newcommand{\Cexp}[1]{C_c^{(\cosh-1)}\left(#1\right)}
\newcommand{\Cinfcs}[1]{C_c^\infty\left(#1\right)}
\newcommand{\Lexp}[1]{L^{(\cosh-1)}\left(#1\right)}
\newcommand{\LlogL}[1]{L^{(\cosh-1)_*}\left(#1\right)}
\newcommand{\absoluteval}[1]{\left|#1\right|}
\newcommand{\euler}{\mathrm{e}}
\newcommand{\expectat}[2]{{\Expectation}_{#1}\left[#2\right]}
\newcommand{\expectof}[1]{\Expectation\left(#1\right)}
\newcommand{\expof}[1]{\exp\left(#1\right)}
\newcommand{\naturals}{\mathbb N}
\newcommand{\normat}[2]{\left\Vert#2\right\Vert_{#1}}
\newcommand{\one}{\bm 1}
\newcommand{\probabilities}{\mathcal P}
\newcommand{\reals}{\mathbb R}
\newcommand{\scalarat}[3]{\left\langle#2,#3\right\rangle_{#1}}
\newcommand{\sdomain}[1]{\mathcal S_{#1}}
\newcommand{\setof}[2]{\left\{#1 \middle| #2 \right\}}
\newcommand{\set}[1]{\left\{#1\right\}}
\begin{document}

\mainmatter              
\title{Translations in the exponential Orlicz space with Gaussian weight}
\titlerunning{Translations on $\Lexp M$}  
%
\author{Giovanni Pistone}
\authorrunning{G. Pistone}
\institute{de Castro Statistics, Collegio Carlo Alberto, Moncalieri, Italy\\
\email{giovanni.pistone@carloalberto.org}\\
\texttt{www.giannidiorestino.it}
}

\maketitle              

\begin{abstract}
We study the continuity of space translations on non-parametric exponential families based on the exponential Orlicz space with Gaussian reference density.\end{abstract}

\section{Introduction}
\label{sec:introduction}
On the Gaussian probability space $(\reals^n,\mathcal B,M \cdot \ell)$, $M$ being the standard Gaussian density and $\ell$ the Lebesgue measure, we consider densities of the form $\euler_M(U) = \expof{U - K_M(U)} \cdot M$, where $U$ belongs to the exponential Orlicz space $\Lexp M$, $\expectat M U = 0$, and $K_M(U)$ is constant  \cite{pistone|sempi:95,pistone:2013GSI}. An application to the homogeneous Boltzmann equation has been discussed in \cite{lods|pistone:2015}.

The main limitation of the standard version of Information Geometry is its inability to deal with the structure of the sample space as it provides a geometry of the ``parameter space'' only. As a first step to overcome that limitation, we want to study the effect of a space translation  $\tau_h$, $h \in \reals^n$, on the exponential probability density $\euler_M(U)$. Such a model has independent interest and, moreover, we expect such a study to convey informations about the case where the density $\euler_M(U)$ admits directional derivatives.

The present note is devoted to the detailed discussion of the some results concerning the translation model that have been announced at the IGAIA IV Conference, Liblice CZ on June 2016. All results are given in Sec.~\ref{sec:gauss-orlicz-spaces}, in particular the continuity result in Prop. \ref{prop:taubyh}. The final Sec.~\ref{sec:conclusions} gives some pointers to further research work to be published elsewhere.

\section{Gauss-Orlicz spaces and translations}
\label{sec:gauss-orlicz-spaces}

The \emph{exponential space} $\Lexp M$ and the \emph{mixture space} $\LlogL M$ are the Orlicz spaces associated the Young functions $(\cosh-1)$ and its convex conjugate $(\cosh - 1)_*$, respectively \cite{musielak:1983}. They are both Banach spaces and the second one has the $\Delta_2$-property, because of the inequality
\begin{equation*}
  (\cosh-1)_*(ay) \le \max(1,a^2) (\cosh-1)_*(y),\quad a,y \in \reals \ .
\end{equation*}

The closed unit balls are $$\setof{f}{\int \phi(f(x)) \ M(x)dx \le 1}$$ with $\phi=\cosh-1$ and $\phi=(\cosh-1)_*$, respectively. Convergence to 0 in norm of a sequence $g_n$, $n\in\naturals$ holds if, and only if, for all $\rho > 0$ one has
\begin{equation*}
\limsup_{n \to \infty} \int \phi(\rho g_n(x)) \ M(x)dx \le 1 \ .
\end{equation*}

If $1 < a < \infty$, the following inclusions hold
      \begin{equation*}
        L^\infty(M) \hookrightarrow \Lexp M \hookrightarrow L^a(M) \hookrightarrow \LlogL M \hookrightarrow L^1(M) \ ,
      \end{equation*}
and the restrictions to the ball $\Omega_R = \setof{x \in \reals^n}{\absoluteval x < R}$,
      \begin{equation*}
        \Lexp M \rightarrow L^a(\Omega_R), \quad  \LlogL M \rightarrow L^1(\Omega_R) \ ,
      \end{equation*}
      are continuous.
      
      The exponential space $\Lexp M$ contains all functions $f \in C^2(\reals^n;\reals)$ whose Hessian is uniformly bounded in operator's norm. In particular, it contains all polynomials with degree up to 2, hence all functions which are bounded by such a polynomial. The mixture space $\LlogL M$ contains all random variables $f \colon \reals^d \to \reals$ which are bounded by a polynomial, in particular, all polynomials.

Let us review those properties of the exponential function on the space $\Lexp M$ that justify our definition of \emph{non-parametric exponential} model as the set of densities $\euler_M(U) = \expof{U - K_M(U)} \cdot M$, where $U$ has zero $M$-expectation and belongs to the interior $\sdomain M$ of the proper domain of the partition functional $Z_M(U) = \expectat M {\euler^U}$.
\begin{proposition}
  \begin{enumerate}
  \item \label{item:1} The functionals $Z_M$ and $K_M = \log Z_M$ are both convex.
  \item \label{item:2} The proper domain of both $Z_M$ and $K_M$ contains the open unit ball of $\Lexp M$, hence its interior $\sdomain M$ is nonempty.
  \item \label{item:3} The functions $Z_M$ and $K_M$ are both Fr\'echet differentiable on $\sdomain M$.
  \end{enumerate}
\end{proposition}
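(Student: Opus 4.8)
\emph{Parts \ref{item:1} and \ref{item:2}.} For part \ref{item:1} the plan is to deduce convexity from log-convexity: for $U_0,U_1\in\Lexp M$ and $t\in[0,1]$, H\"older's inequality with conjugate exponents $1/t$ and $1/(1-t)$ applied to $\expof{tU_1}\cdot\expof{(1-t)U_0}$ gives
\begin{equation*}
  Z_M\bigl(tU_1+(1-t)U_0\bigr)=\expectat M{\expof{tU_1+(1-t)U_0}}\le Z_M(U_1)^t\,Z_M(U_0)^{1-t}\le tZ_M(U_1)+(1-t)Z_M(U_0),
\end{equation*}
so $Z_M$ is convex and $K_M=\log Z_M$ is convex (being a log-convex functional). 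For part \ref{item:2}, let $U$ satisfy $\normat{\Lexp M}{U}<1$ (the case $U=0$ is trivial) and pick $\rho>1$ with $\rho\,\normat{\Lexp M}{U}\le1$; then $\expectat M{(\cosh-1)(\rho U)}\le1$, and since $\expof U\le2\cosh U\le2\cosh(\rho U)$ pointwise one gets $Z_M(U)\le2\expectat M{\cosh(\rho U)}\le4<\infty$. Hence the open unit ball of $\Lexp M$ lies in the proper domain of $Z_M$, therefore in its interior $\sdomain M$, which is thus nonempty; the same holds for $K_M$ because $Z_M>0$ everywhere.

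\emph{Part \ref{item:3}.} I would prove that $Z_M$ is Fr\'echet differentiable at each $U\in\sdomain M$ with $dZ_M(U)[V]=\expectat M{\expof U\,V}$, and then get $K_M$ by the chain rule. Since $\sdomain M$ is the interior of the convex set $\{Z_M<\infty\}$, which contains $0$, the segment from $0$ through $U$ stays in the domain slightly past $U$; so $aU\in\sdomain M$ for some $a>1$, whence $\expof U\in L^a(M)\hookrightarrow\LlogL M$, and the Orlicz H\"older inequality shows $V\mapsto\expectat M{\expof U\,V}$ is a bounded linear form on $\Lexp M$. For the remainder, write
\begin{equation*}
  Z_M(U+V)-Z_M(U)-\expectat M{\expof U\,V}=\expectat M{\expof U\bigl(\expof V-1-V\bigr)},
\end{equation*}
and use the elementary bound $\absoluteval{\expof v-1-v}\le\tfrac12 v^2\expof{\absoluteval v}\le\tfrac12 v^2\bigl(\expof v+\expof{-v}\bigr)$ to dominate the right-hand side by $\tfrac12\expectat M{V^2\expof{U+V}}+\tfrac12\expectat M{V^2\expof{U-V}}$. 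Applying H\"older with conjugate exponents $p'$ and $p$ and then the embedding $\Lexp M\hookrightarrow L^{2p'}(M)$, this is at most $C\,\normat{\Lexp M}{V}^2\bigl(Z_M(p(U+V))^{1/p}+Z_M(p(U-V))^{1/p}\bigr)$.

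The main obstacle is to control the two trailing factors. I must choose a single $p>1$ and a radius $r>0$ so that $p(U\pm V)$ lies in the domain of $Z_M$ for all $\normat{\Lexp M}{V}\le r$, and so that $Z_M\bigl(p(U\pm V)\bigr)$ stays bounded there. The first point holds because $pU$ is again an interior point of the domain for $p$ close to $1$, so a whole norm-ball around $pU$ is contained in the domain; the second amounts to showing that $Z_M$ is \emph{locally bounded} on $\sdomain M$, which I would prove directly from the modular description of the norm: from $\cosh(a+b)\le2\cosh a\cosh b$ one gets $\cosh W\le2\cosh W_0\,\cosh(W-W_0)$, and a further H\"older step combined with $\expectat M{(\cosh-1)(q(W-W_0))}\le1$ for $\normat{\Lexp M}{W-W_0}<1/q$ and $\expof{\cdot}\le2\cosh(\cdot)$ bounds $Z_M(W)$ uniformly for $W$ near $W_0$, provided $q'W_0$ belongs to the domain for some $q'>1$ — once more the interior-point ray argument. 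With this, the remainder is $O\bigl(\normat{\Lexp M}{V}^2\bigr)=o\bigl(\normat{\Lexp M}{V}\bigr)$, so $Z_M$ is Fr\'echet differentiable on $\sdomain M$; since $Z_M>0$ and $\log$ is $C^\infty$ on $(0,\infty)$, $K_M=\log Z_M$ is Fr\'echet differentiable on $\sdomain M$ with $dK_M(U)=Z_M(U)^{-1}\,dZ_M(U)$.
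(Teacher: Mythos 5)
Your argument for part \ref{item:3} --- the only part the paper actually proves, parts \ref{item:1} and \ref{item:2} being declared well known there (and your proofs of those are fine) --- is in substance the same as the paper's: Taylor-expand the exponential, apply H\"older with an exponent $\alpha>1$ chosen so that $\alpha U\in\sdomain M$ to peel off the fixed finite factor $\expectat M{\euler^{\alpha U}}$, bound the exponential of the increment by a constant on a small ball via the modular inequality $\expectat M{(\cosh-1)(\rho H)}\le 1$ for $\normat{\Lexp M}{H}\le 1/\rho$, and absorb the quadratic factor through the embedding $\Lexp M\hookrightarrow L^{a}(M)$. That the paper uses the integral form of the remainder where you use a pointwise elementary bound is immaterial.

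The one step you should repair is the local-boundedness lemma. The inequality $\cosh W\le 2\cosh W_0\cosh(W-W_0)$ forces you to integrate $(\cosh W_0)^{q'}$, hence $\euler^{q'\absoluteval{W_0}}$, which requires \emph{both} $q'W_0$ and $-q'W_0$ to lie in the proper domain of $Z_M$; that domain is convex but not symmetric, so this can fail even for $W_0\in\sdomain M$. Factor $\euler^W=\euler^{W_0}\,\euler^{W-W_0}$ and apply H\"older directly: this needs only $q'W_0$ in the domain, and it is precisely the split the paper performs on $\euler^{U+sH}$ --- which is why the paper never needs local boundedness of $Z_M$ as a separate statement.
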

  \begin{proof} Statements \ref{item:1}--\ref{item:3} above are all well known. Nevertheless, we give the proof of the differentiability. We have
\begin{equation*}
0 \le \expof {U+H} - \expof U - \expof U H = \int_0^1 (1-s) \expof{U+sH} H^2 \ ds \ .
\end{equation*}
For all $U,U+H \in \sdomain M$, choose $\alpha > 1$ such that $\alpha U \in \sdomain M$. We have
\begin{equation*}
  0 \le Z_M(U+H) - Z_M(U) - \expectat M {\expof U H} = \int_0^1 (1-s) \expectat M {\expof{U+sH} H^2} \ ds \ ,
\end{equation*}
where the derivative term $H \mapsto \expectat M {\expof U H}$ is continuous at $U$ because
\begin{multline*}
  \absoluteval{\expectat M {\expof U H}} \le \expectat M {\expof {\alpha U}}^{1/\alpha} \expectat M {\absoluteval H^{\alpha/(\alpha-1)}}^{(\alpha-1)/\alpha} \le \\ \text{const} \times \  \expectat M {\expof {\alpha U}}^{1/\alpha} \normat {\Lexp M} H \ . 
\end{multline*}
The remainder term is bounded by
\begin{multline*}
  \absoluteval{Z_M(U+H) - Z_M(U) - \expectat M {\expof U H}} = \\ \int_0^1 (1-s) \expectat M {\expof{U+sH} H^2} \ ds \le \\
  \expectat M {\euler^{\alpha U}}^{1/\alpha} \int_0^1 (1-s) \expectat M {\expof{s\frac{\alpha}{\alpha -1}H} H^{2\frac{\alpha}{\alpha-1}}}^{(\alpha-1)/\alpha}
  \ ds \le \\ \text{const} \times \expectat M {H^{4\frac{\alpha}{\alpha-1}}}^{(\alpha-1)/2\alpha}
  \int_0^1 (1-s) \expectat M {\expof{s\frac{2\alpha}{\alpha -1}H}}^{(\alpha-1)/2\alpha} 
  \ ds
  \ .
\end{multline*}
We have
\begin{equation*}
\expectat M {\expof{s\frac{2\alpha}{\alpha -1}H}} \le 2\left(\expectat M {(\cosh-1)\left(s\frac{2\alpha}{\alpha -1}H\right)+1}\right) \le 4 
\end{equation*}
if $\normat {\Lexp M}{H} \le (\alpha-1)/2\alpha$. Under this condition, we have
\begin{multline*}
 \absoluteval{Z_M(U+H) - Z_M(U) - \expectat M {\expof U H}} \le \\ \text{const}\times \normat{L^{4\alpha/(\alpha-1)}(M)} H ^2 \le \text{const}\times \normat {\Lexp M} H ^2 \ ' 
\end{multline*}
where the constant depends on $U$. \qed
\end{proof}

The space $\Lexp M$ is neither separable nor reflexive. However, we have the following density property for the bounded point-wise convergence. The proof uses a form of the Monotone-Class argument \cite[22.3]{dellacherie|meyer:75}. Let $\Ccs{\reals^n}$ and $\Cinfcs{\reals^n}$ respectively denote the space of continuous real functions with compact support and its sub-space of infinitely-differentiable functions.

\begin{proposition}\label{prop:density} 
For each $f \in \Lexp M$ there exists a nonnegative function $h \in \Lexp M$ and a sequence $f_n \in \Cinfcs{\reals^n}$ with $\absoluteval{f_n} \le h$, $n=1,2,\dots$, such that $\lim_{n\to\infty} f_n = f$ a.e. As a consequence, $\Cinfcs {\reals^n}$ is weakly dense in $\Lexp M$. 
\end{proposition}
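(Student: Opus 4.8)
The plan is to build, for a given $f\in\Lexp M$, a single nonnegative dominant $h\in\Lexp M$ by an explicit maximal–function construction, and the sequence $f_n$ by mollifying truncations of $f$; the stated consequence is then automatic. Indeed, suppose $h\ge 0$ in $\Lexp M$ and $f_n\in\Cinfcs{\reals^n}$ satisfy $\absoluteval{f_n}\le h$ and $f_n\to f$ a.e. For every $g\in\LlogL M$ the Orlicz--Hölder inequality (the pairing of $\Lexp M$ with $\LlogL M$) gives $\absoluteval{f_n g}\le h\absoluteval g\in L^1(M)$, and $f_n g\to fg$ a.e., so dominated convergence yields $\expectat M{f_n g}\to\expectat M{fg}$. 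Since $\LlogL M$ has the $\Delta_2$-property it is the predual of $\Lexp M$, so this says precisely that $f_n\to f$ in $\sigma(\Lexp M,\LlogL M)$; hence $\Cinfcs{\reals^n}$ is weakly dense. It remains to produce $h$ and $(f_n)$.

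Fix a mollifier $\rho\in\Cinfcs{\reals^n}$, $\rho\ge 0$, $\int\rho\,d\ell=1$, supported in the closed unit ball, and put $\rho_\delta(x)=\delta^{-n}\rho(x/\delta)$. Since $\Lexp M\hookrightarrow L^1(M)$ and $M$ is bounded away from $0$ on bounded sets, $f\in L^1_{\mathrm{loc}}(\reals^n,\ell)$, so $f\one_{\Omega_n}\in L^1(\reals^n,\ell)$ and $(f\one_{\Omega_n})*\rho_\delta\to f\one_{\Omega_n}$ in $L^1(\reals^n,\ell)$ as $\delta\downarrow 0$. Choose $\delta_n\in(0,1]$ with $\normat{L^1(\reals^n,\ell)}{(f\one_{\Omega_n})*\rho_{\delta_n}-f\one_{\Omega_n}}\le 2^{-n}$ and set $f_n:=(f\one_{\Omega_n})*\rho_{\delta_n}$. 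Then $f_n\in\Cinfcs{\reals^n}$ with support in $\Omega_{n+1}$; as $\sum_n\normat{L^1(\reals^n,\ell)}{f_n-f\one_{\Omega_n}}<\infty$ we get $f_n-f\one_{\Omega_n}\to 0$ $\ell$-a.e., and $f\one_{\Omega_n}\to f$ everywhere, whence $f_n\to f$ a.e.

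For the dominant take $h:=\sup_{0<\delta\le 1}(\absoluteval f*\rho_\delta)$. From $\absoluteval{f_n}\le(\absoluteval f\one_{\Omega_n})*\rho_{\delta_n}\le\absoluteval f*\rho_{\delta_n}\le h$ the whole problem reduces to $h\in\Lexp M$, and this weighted maximal estimate is the one genuinely non-routine point. Pick $\rho_0>0$ with $\expectat M{\euler^{\rho_0\absoluteval f}}<\infty$ (possible since $f\in\Lexp M$ and $\euler^t\le 2\cosh t$ for $t\ge 0$) and set $\rho:=\rho_0/2$. Applying Jensen's inequality to the convex map $t\mapsto\euler^{\rho t}$ and the probability density $\rho_\delta$, then estimating crudely, one finds $\euler^{\rho(\absoluteval f*\rho_\delta)}\le\euler^{\rho\absoluteval f}*\rho_\delta\le\text{const}\cdot\mathcal M_1(\euler^{\rho\absoluteval f})$, with a constant depending only on $\rho,n$ and $\mathcal M_1$ the Hardy--Littlewood maximal operator restricted to balls of radius $\le 1$; taking the supremum over $\delta$ gives $\euler^{\rho h}\le\text{const}\cdot\mathcal M_1(\euler^{\rho\absoluteval f})$, so it suffices to check $\mathcal M_1(\euler^{\rho\absoluteval f})\in L^1(M)$. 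Split $\reals^n$ into the unit annuli $R_m=\Omega_{m+1}\setminus\Omega_m$: for $x\in R_m$ the relevant balls lie in $A_m:=\Omega_{m+2}\setminus\Omega_{m-1}$, hence on $R_m$ one has $\mathcal M_1(\euler^{\rho\absoluteval f})=\mathcal M_1(\euler^{\rho\absoluteval f}\one_{A_m})\le\mathcal M(\euler^{\rho\absoluteval f}\one_{A_m})$. Using Cauchy--Schwarz, the classical $L^2(\reals^n,\ell)$-boundedness of $\mathcal M$, and $\normat{L^2(A_m,\ell)}{\euler^{\rho\absoluteval f}}^2=\int_{A_m}\euler^{\rho_0\absoluteval f}\,d\ell\le\text{const}\cdot\euler^{(m+2)^2/2}\expectat M{\euler^{\rho_0\absoluteval f}}$ (since $M\ge\text{const}\cdot\euler^{-(m+2)^2/2}$ on $A_m$),
\begin{multline*}
\int_{R_m}\mathcal M_1(\euler^{\rho\absoluteval f})\,M\,d\ell \le \Bigl(\sup_{R_m}M\Bigr)\,\absoluteval{R_m}^{1/2}\,\normat{L^2(\reals^n,\ell)}{\mathcal M(\euler^{\rho\absoluteval f}\one_{A_m})}\\ \le \text{const}\cdot(m+1)^{(n-1)/2}\,\euler^{-m^2/2+(m+2)^2/4} ,
\end{multline*}
and the exponent $-m^2/2+(m+2)^2/4$ tends to $-\infty$, so the series over $m$ converges and $h\in\Lexp M$.

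In summary, the construction of $(f_n)$ and the weak-density consequence are routine, and the step I expect to require real work is the verification that $h=\sup_{0<\delta\le 1}(\absoluteval f*\rho_\delta)$ belongs to $\Lexp M$, i.e. the $L^2$-based maximal inequality together with the Gaussian tail bookkeeping above. (One can instead reach the approximation through a monotone-class reduction to a generating subclass of $\Cinfcs{\reals^n}$, but the existence of a common dominant $h\in\Lexp M$ still has to be argued, essentially as here.)
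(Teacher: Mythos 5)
Your proof is correct, but it follows a genuinely different route from the paper. The paper proves the statement by a soft Monotone-Class argument: it introduces the class $\mathcal L$ of functions admitting a dominated point-wise approximation from $\Ccs{\reals^n}$, checks that $\mathcal L$ is a vector space containing the constants and $\Ccs{\reals^n}$, closed under $\wedge$ and $\vee$, and concludes via the functional monotone class theorem that $\mathcal L$ exhausts the Borel functions dominated by an element of $\Lexp M$, i.e.\ all of $\Lexp M$; the dominant $h$ is produced implicitly through the closure operations. You instead construct everything explicitly: the approximants are mollified truncations $f_n=(f\one_{\Omega_n})*\rho_{\delta_n}$, and the dominant is the local maximal function $h=\sup_{0<\delta\le1}(\absoluteval f*\rho_\delta)$, reduced via Jensen to the weighted estimate $\mathcal M_1(\euler^{\rho_0\absoluteval f/2})\in L^1(M)$, which you verify by an annulus decomposition: the exponent $-m^2/2+(m+2)^2/4\to-\infty$ makes the series converge, and the Gaussian tail bookkeeping checks out. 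What the paper's argument buys is generality and brevity — it uses nothing about $M$ beyond positivity, and no harmonic analysis. What your argument buys is explicitness: the approximants are convolutions (which feeds directly into the mollifier discussion after Prop.~4), and you get the quantitative maximal inequality $\euler^{\rho h}\le\mathrm{const}\cdot\mathcal M_1(\euler^{\rho\absoluteval f})$ as a by-product; the price is that the estimate genuinely exploits the Gaussian decay of $M$ and the restriction to radii $\le 1$. Your derivation of the weak-density consequence by dominated convergence against $g\in\LlogL M$ coincides with the paper's (one-line) argument.

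Three minor points you should tidy up. First, you use the letter $\rho$ both for the bump function and for the exponent $\rho_0/2$; rename one of them. Second, the supremum defining $h$ is over an uncountable set, so you should note that $\delta\mapsto(\absoluteval f*\rho_\delta)(x)$ is continuous (or restrict to rational $\delta$) to get measurability of $h$. Third, ``weakly dense'' here must be read with respect to the duality $\scalarat M\cdot\cdot$ with $\LlogL M$ (i.e.\ the topology $\sigma(\Lexp M,\LlogL M)$), since $\Lexp M$ is not reflexive and its full dual is larger; this is what the paper intends as well, so your reading is consistent, but it is worth stating.
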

\begin{proof}
Before starting the proof, let us note that $\Lexp M$ is stable under bounded a.e. convergence. Assume $f_n,h \in \Lexp M$ with $\absoluteval{f_n} \le h$, $n=1,2,\dots$ and $\lim_{n\to\infty} f_n = f$ a.e. By definition of $h \in \Lexp M$, for $\alpha = \normat {\Lexp M} h^{-1}$ we have the bound $\expectat M {(\cosh-1)(\alpha h)} \le 1$. The sequence of functions $(\cosh-1)(\alpha f_n)$, $n = 1,2,\dots$, is a.e. convergent to $(\cosh-1)(\alpha f)$ and it is bounded by the integrable function $(\cosh-1)(\alpha h)$. The inequality $\expectat M {(\cosh-1)(\alpha f)} \le 1$ follows now by dominated convergence and is equivalent to $\normat {\Lexp M} f \le \normat {\Lexp M} h$. By taking a converging sequences $(f_n)$ in  $\Cinfcs {\reals^n}$ we see that the condition in the proposition is sufficient. Conversely, let $\mathcal L$ be the set of all functions $f \in \Lexp M$ such that there exists a sequence $(f_n)_{n\in\naturals}$ in $C_c(\reals^n)$ which is dominated by a function $h \in \Lexp M$ and converges to $f$ point-wise. The set $\mathcal L$ contains the constant functions and $C_c(\reals^n)$ itself. The set $\mathcal L$ is a vector space: if $f^1,f^2 \in \mathcal L$ and both $f^1_n \to f^1$ a.s. with $\absoluteval{f^1_n} \le h^1$ and $f^2_n \to f^2$ point-wise with $\absoluteval{h^2_n} \le h^2$, then $\alpha_1 f^1_n + \alpha_2 f^2_n \to \alpha_1 f^1 + \alpha_2 f^2$ point-wise with $\absoluteval{\alpha_1 f^1_n + \alpha_2 f^2_n} \le \absoluteval{\alpha_1} h^1 + \absoluteval{\alpha_2} h^2$. Moreover, $\mathcal L$ is closed under the $\min$ operation: if $f^1,f^2 \in \mathcal L$, with both $f^1_n \to f^1$ with $\absoluteval{g^1_n} \le h^1$ and $f^2_n \to f^2$ with $\absoluteval{g^2_n} \le h^2$, then $f^1_n \wedge f^2_n \to f_1 \wedge f_2$ and $\absoluteval{f^1_n \wedge f^2_n} \le h^1 \wedge h^2 \in \Lexp M$. $\mathcal L$ is closed for the maximum too, because $f^1 \vee f^2 = - \left((-f^1) \wedge (-f^2)\right)$. We come now to the application of  the Monotone-Class argument. As $\one_{f > a} = ((f-a)\vee 0)\wedge 1 \in \mathcal L$, each element of $\mathcal L$ is the point-wise limit of linear combinations of indicator functions in $\mathcal L$. Consider the class $\mathcal C$ of sets whose indicator belongs to $\mathcal L$. $\mathcal C$ is a $\sigma$-algebra because of the closure properties of $\mathcal L$ and contains all open bounded rectangles of $\reals^n$ because they are all of the form $\set{f > 1}$ for some $f \in \Ccs {\reals^n}$. Hence $\mathcal C$ is the Borel $\sigma$-algebra and $\mathcal L$ is the set of Borel functions which are bounded by an element of $\Lexp M$, namely $\mathcal L = \Lexp M$. To conclude, note that each $g \in \Ccs {\reals^n}$ is the uniform limit of a sequence in $\Cinfcs{\reals^n}$. The last statement is proved by bounded convergence. \qed
\end{proof}

Let us discuss some consequences of this result. Let be given $u \in \sdomain M$ and consider the exponential family $p(t) = \expof{tu - K_M(tu)} \cdot M$, $t \in ]-1,1[$. From Prop. \ref{prop:density} we get a sequence $(f_n)_{n\in\naturals}$ in $\Cinfcs{\reals^n}$ and a bound $h \in \Lexp M$ such that $f_n \to u$ point-wise and $\absoluteval{f_n}, \absoluteval{u} \le h$. As $\sdomain M$ is open and contains 0, we have $\alpha h \in \sdomain M$ for some $0 < \alpha < 1$. For each $t \in ]-\alpha,\alpha[$, $\expof{tf_n} \to \expof{tu}$ point-wise and $\expof{tf} \le \expof{\alpha h}$ with $\expectat M {\expectof{\alpha h}} < \infty$. It follows that $K_M(tf_n) \to K(tu)$, so that we have the point-wise convergence of the density $p_n(t) = \expof{tf_n-K_M(tf_n)} \cdot M$ to the density $p(t)$. By Scheff\'e's lemma, the convergence holds in $L^1(\reals^n)$. In particular, for each $\phi \in \Cinfcs{\reals^n}$, we have the convergence
\begin{equation*}
  \int \partial_i \phi(x) p_n(x;t) \ dx \to \int \partial_i \phi(x) p(x;t) \ dx, \quad n\to\infty \ .
\end{equation*}
for all $t$ small enough. By computing the derivatives, we have
\begin{multline*}
 \int \partial_i \phi(x) p_n(x;t) \ dx = - \int \phi(x) \partial_i\left(\euler^{tf_n(x) - K_M(tf_n)} M(x)\right) \ dx = \\ \int \phi(x) \left(x_i - t \partial_i f_n(x)\right) p_n(x;t) \ dx  \ ,
\end{multline*}
that is, $$\left(X_i - t\partial_if_n\right)p_n(t) \to -\partial_ip(t)$$ in the sense of (Schwartz) distributions.
It would be of interest to discuss the possibility of the stronger convergence of $p_n(t)$ in $\LlogL M$, but we do follow this development here. 

The norm convergence of the point-wise bounded approximation will not hold in general. Consider the following example. The function $f(x) = \absoluteval x^2$ belongs in $\Lexp M$, but for the tails $f_R(x) = (\absoluteval x > R) \absoluteval x^2$ we have
\begin{equation*}
  \int (\cosh-1)(\epsilon^{-1} f_R(x)) \ M(x)dx \ge \frac12 \int_{\absoluteval x > R} \euler^{\epsilon^{-1} \absoluteval x^2} \ M(x)dx = +\infty, \quad \text{if $\epsilon \le 2$} \ ,
\end{equation*}
hence there is no convergence to 0. However, the truncation of $f(x) = \absoluteval x$ does converge. This, together with Prop. \ref{prop:density}, suggests the following variation of the classical definition of Orlicz class.

\begin{definition}
The \emph{exponential class}, $\Cexp M$, is the closure of $\Cinfcs{\reals^n}$ in the space $\Lexp M$.
\end{definition}  
\begin{proposition}\label{prop:d}
 Assume $f \in \Lexp M$ and write $f_R(x) = f(x)(\absoluteval x > R)
$. The following conditions are equivalent: 
\begin{enumerate}
\item\label{item:d1}  The real function $\rho \mapsto \int (\cosh-1)(\rho f(x)) \ M(x)dx$ is finite for all $\rho > 0$.
\item\label{item:d2} $f$ is the limit in $\Lexp M$-norm of a sequence of bounded functions.
\item\label{item:d3}  $f \in \Cexp M$.
\end{enumerate}
\end{proposition}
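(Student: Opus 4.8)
The plan is to prove the cycle \ref{item:d2} $\Rightarrow$ \ref{item:d1} $\Rightarrow$ \ref{item:d3} $\Rightarrow$ \ref{item:d2}, the last implication being trivial since every $g \in \Cinfcs{\reals^n}$ is bounded, so a norm limit of such functions is a fortiori a norm limit of bounded functions. For \ref{item:d2} $\Rightarrow$ \ref{item:d1}, suppose $g_n \to f$ in $\Lexp M$ with each $g_n$ bounded and fix $\rho > 0$. Applying the norm-convergence criterion recalled above to the sequence $f - g_n$, with $2\rho$ in place of $\rho$, one gets $\limsup_n \int (\cosh-1)\bigl(2\rho(f - g_n)\bigr)\,M\,dx \le 1$, so there is an index $n$ with this integral finite. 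Writing $\rho f = \tfrac12\bigl(2\rho(f - g_n)\bigr) + \tfrac12\bigl(2\rho g_n\bigr)$ and using convexity of $(\cosh-1)$ gives $(\cosh-1)(\rho f) \le \tfrac12(\cosh-1)\bigl(2\rho(f - g_n)\bigr) + \tfrac12(\cosh-1)(2\rho g_n)$; the first term on the right is $M$-integrable by the choice of $n$ and the second is dominated by the constant $(\cosh-1)\bigl(2\rho\sup\absoluteval{g_n}\bigr)$. Hence $\int(\cosh-1)(\rho f)\,M\,dx < \infty$, and since $\rho$ was arbitrary, \ref{item:d1} holds.

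For \ref{item:d1} $\Rightarrow$ \ref{item:d3} I would use that $\Cexp M$ is, by definition, a closed subspace of $\Lexp M$, and approximate $f$ by $\Cinfcs{\reals^n}$ functions in two steps. The first step reduces to bounded, compactly supported functions. Truncating in value, $f^{(k)} = (f\wedge k)\vee(-k)$, one has the a.e. domination $(\cosh-1)\bigl(\rho(f - f^{(k)})\bigr) \le (\cosh-1)(\rho f)\,\one_{\{|f|>k\}}$, and the right-hand side is $M$-integrable — this is exactly where hypothesis \ref{item:d1} enters — and tends to $0$ a.e. as $k\to\infty$, so $f^{(k)}\to f$ in $\Lexp M$ by dominated convergence and the norm criterion. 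Next, truncating in space, for any bounded $g$ the tails $g\,\one_{\{|x|>R\}}$ converge to $0$ in $\Lexp M$, since the integrand $(\cosh-1)(\rho g)\,\one_{\{|x|>R\}}$ is dominated by the constant $(\cosh-1)\bigl(\rho\sup\absoluteval g\bigr)$ and vanishes pointwise as $R\to\infty$. A diagonal argument then yields a sequence of bounded, compactly supported functions converging to $f$ in $\Lexp M$.

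The second step — approximating a bounded $g$ with support in some $\Omega_{R_0}$ and $\absoluteval g \le c$ by elements of $\Cinfcs{\reals^n}$ — is the point I expect to be the main obstacle, because norm convergence in the non-separable space $\Lexp M$ is not a consequence of $L^1$- or pointwise convergence, and Proposition \ref{prop:density} only furnishes weak density. The device is localisation: the mollifications $g\ast\eta_\varepsilon$ belong to $\Cinfcs{\reals^n}$, are bounded by $c$, and for $\varepsilon \le 1$ are all supported in the fixed ball $\Omega_{R_0+2}$, on which the Gaussian weight is bounded, say by $m$, and on which $(\cosh-1)(t)\le \kappa\,t^2$ for $\absoluteval t \le 2\rho c$, with $\kappa$ depending only on $\rho c$. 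Hence $\int(\cosh-1)\bigl(\rho(g - g\ast\eta_\varepsilon)\bigr)\,M\,dx \le m\kappa\rho^2\,\normat{L^2(\ell)}{g - g\ast\eta_\varepsilon}^2 \to 0$ as $\varepsilon\to0$, by the classical $L^2(\ell)$-convergence of mollifiers, which together with the norm criterion gives $g\ast\eta_\varepsilon \to g$ in $\Lexp M$, so $g \in \Cexp M$. Combining this with the first step and the closedness of $\Cexp M$ yields $f \in \Cexp M$, closing the cycle.
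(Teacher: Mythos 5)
Your proof is correct, and while the easy legs of the cycle coincide with the paper's, the substantive implication is handled by a genuinely different route. The paper first proves $\eqref{item:d1}\Leftrightarrow\eqref{item:d2}$ (the $\eqref{item:d2}\Rightarrow\eqref{item:d1}$ half by exactly your convexity splitting, the converse by truncating at level $n$ and dominating with $(\cosh-1)(\rho f)$, much as in your first step), and then gets $\eqref{item:d2}\Rightarrow\eqref{item:d3}$ by invoking Proposition~\ref{prop:density}: for a \emph{bounded} $f$ the point-wise approximating sequence in $\Cinfcs{\reals^n}$ can be taken dominated by the constant bounding $f$, so dominated convergence upgrades the point-wise convergence to norm convergence, and closedness of $\Cexp M$ finishes. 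You instead bypass the Monotone-Class density result entirely for this step: after truncating in value and in space (both steps matching the paper's spirit), you mollify and control $\normat{\Lexp M}{g-g\ast\eta_\varepsilon}$ by the classical $L^2(\ell)$ convergence of mollifiers, using that on a fixed compact set $(\cosh-1)$ of a uniformly bounded argument is comparable to its square and the Gaussian weight is bounded. This correctly addresses the one point you flagged as the obstacle --- that weak density does not give norm density --- and it is arguably more self-contained and quantitative than the paper's appeal to Proposition~\ref{prop:density}, which silently requires that the dominated point-wise approximants of a bounded function can themselves be taken bounded by the same constant while remaining in $\Cinfcs{\reals^n}$. What the paper's route buys in exchange is economy: having already proved the density proposition, it reuses it and avoids any convolution estimates. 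Both arguments are sound; yours also yields the explicit approximating family $f^{(k)}\one_{\absoluteval x\le R_k}\ast\eta_\varepsilon$, which is in keeping with the mollifier discussion following Proposition~\ref{prop:taubymu}.
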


\begin{proof}
  \begin{description}
  \item[$\eqref{item:d1}\Leftrightarrow\eqref{item:d2}$] This is well known, but we give a proof for sake of clarity. We can assume $f \ge 0$ and consider the sequence of bounded functions $f_n = f \wedge n$, $n=1,2,\dots$. We have for all $\rho > 0$ that $\lim_{n\to\infty} (\cosh-1)(\rho (f - f_n)) = 0$ point-wise and $(\cosh-1)(\rho (f - f_n))M \le (\cosh-1)(\rho (f)M$ which is integrable by assumption. Hence 
    \begin{multline*}
    0 \le   \limsup_{n\to\infty} \int (\cosh-1)(\rho (f(x) - f_n(x)))M(x)\ dx \le \\ \int \limsup_{n\to\infty} (\cosh-1)(\rho (f(x) - f_n(x)))M(x)\ dx = 0 \ ,
    \end{multline*}
    which in turn implies $\lim_{n\to\infty} \normat {\Lexp M} {f - f_n} = 0$. Conversely, observe first that we have from the convexity of $(\cosh-1)$ that 
  \begin{equation*}
    2(\cosh-1)(\rho(x+y)) \le (\cosh-1)(2\rho x)+(\cosh-1)(2\rho y) \ .
  \end{equation*}
  It follows that, for all $\rho > 0$ and $n=1,2,\dots$, we have
  \begin{multline*}
   2  \int (\cosh-1)(\rho f(x))M(x)\ dx \le \\ \int (\cosh-1)(2\rho(f(x)-f_n(x)))M(x)\ dx + \int (\cosh-1)(2\rho f_n(x))M(x)\ dx \ ,
  \end{multline*}
  where the $\limsup _{n\to\infty}$ of the first term of the RHS is bounded by 1 because of the assumption of strong convergence, while the second term is bounded by $(\cosh-1)(2\rho n)$. Hence the LHS is finite for all $\rho > 0$.
\item[$\eqref{item:d2}\Rightarrow\eqref{item:d3}$] Assume first $f$ bounded and use Prop. \ref{prop:density} to find a point-wise approximation $f_n \in C_0(\reals^n)$, $n \in \naturals$, of $f$ together with a dominating function $|f_n(x)| \le h(x)$, $h \in \Lexp M$. As $f$ is actually bounded, we can assume $h$ to be equal to the constant bounding $f$. We have $\lim_{n\to\infty}(\cosh-1)(\rho(f-f_n)) = 0$ point-wise, and $(\cosh-1)(\rho(f-f_n)) \le (\cosh-1)(2 \rho h)$. By dominated convergence we have $\lim_{n\to\infty}\int (\cosh-1)(\rho(f(x)-f_n(x)))M(x)\ dx = 0$ for all $\rho>0$, which implies the convergence $\lim_{n\to\infty} \normat {\Lexp M}{f-f_n} = 0$. Because of \eqref{item:d2}, we have the desired result.
\item[$\eqref{item:d3}\Rightarrow\eqref{item:d2}$] Obvious from $\Ccs {\reals^n} \subset L^\infty(M)$.\qed
\end{description}
\end{proof}

We discuss now properties of translation operators in a form adapted to the exponential space $\Lexp M$. Define $\tau_h f(x) = f(x - h)$, $h\in\reals^n$.

\begin{proposition}[Translation by a vector]\
\label{prop:taubyh}  
\begin{enumerate}
\item \label{item:taubyh1} For each $h \in \reals^n$, the mapping $f \mapsto \tau_h f$ is linear from $\Lexp M$ to itself and $\normat {\Lexp M} {\tau_h f} \le 2 \normat {\Lexp M} f$ if $\absoluteval h \le \sqrt{\log 2}$.

\item \label{item:taubyh3} The transpose of $\tau_h$ is defined on $\LlogL M$ by $\scalarat M {\tau_h f} g = \scalarat M f {\tau_h^* g}$, $f \in \Lexp M$, and is given by $\tau_h^* g(x) = \euler^{-h \cdot x + \absoluteval h^2/2} \tau_{-h}g(x)$. For the dual norm, the bound $\normat {\Lexp M^*} {\tau^*_h g} \le 2 \normat {\Lexp M^*} g$ holds if $\absoluteval h \le \sqrt {\log 2}$.

\item \label{item:taubyh2} If $f \in \Cexp M$ then $\tau_h f \in \Cexp M$, $h \in \reals^n$ and the mapping $\reals^n \colon h \mapsto \tau_h f$ is continuous in $\Lexp M$. 
  \end{enumerate}
\end{proposition}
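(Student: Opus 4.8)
The plan is to reduce everything to the explicit Gaussian density ratio $M(y+h)/M(y) = \euler^{-y\cdot h - \absoluteval h^2/2}$ together with an elementary half-angle inequality for $\cosh - 1$. For \eqref{item:taubyh1}, linearity is immediate, so fix $f$ with $\normat{\Lexp M}{f} = \lambda \in (0,\infty)$; then $\int(\cosh-1)(f/\lambda)\,M\le 1$, and the change of variables $y=x-h$ gives
\begin{equation*}
\int (\cosh-1)\!\left(\tfrac{\tau_h f(x)}{2\lambda}\right) M(x)\,dx = \int (\cosh-1)\!\left(\tfrac{f(y)}{2\lambda}\right) \euler^{-y\cdot h - \absoluteval h^2/2}\,M(y)\,dy .
\end{equation*}
Using $\cosh^2(t/2) = \tfrac12(1+\cosh t)$ and $\cosh(t/2)\ge 1$ one gets the pointwise bound $\bigl((\cosh-1)(t/2)\bigr)^2\le\tfrac12(\cosh-1)(t)$; applying Cauchy--Schwarz to the right-hand side above, together with this bound and the Gaussian identity $\int\euler^{-2y\cdot h-\absoluteval h^2}M(y)\,dy=\euler^{\absoluteval h^2}$, the right-hand side is at most $\tfrac{1}{\sqrt2}\,\euler^{\absoluteval h^2/2}$, which is $\le1$ precisely when $\absoluteval h\le\sqrt{\log2}$, so $\normat{\Lexp M}{\tau_h f}\le 2\lambda$. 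For an arbitrary $h$ one factors $\tau_h=(\tau_{h/m})^m$ with $m$ large enough that $\absoluteval{h/m}\le\sqrt{\log2}$, which shows $\tau_h$ maps $\Lexp M$ into itself.

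For \eqref{item:taubyh3}, the same change of variables yields $\scalarat{M}{\tau_h f}{g} = \int f(y)\,\euler^{-y\cdot h-\absoluteval h^2/2}\,g(y+h)\,M(y)\,dy$, which exhibits $\tau_h^*$ on $\LlogL M$ as multiplication by the density ratio composed with $\tau_{-h}$. The dual-norm bound then follows from \eqref{item:taubyh1} by transposition: for $\normat{\Lexp M}{f}\le1$ and $\absoluteval h\le\sqrt{\log2}$ one has $\absoluteval{\scalarat{M}{f}{\tau_h^* g}}=\absoluteval{\scalarat{M}{\tau_h f}{g}}\le\normat{\Lexp M}{\tau_h f}\,\normat{\Lexp M^*}{g}\le 2\,\normat{\Lexp M^*}{g}$, and taking the supremum over such $f$ gives $\normat{\Lexp M^*}{\tau_h^* g}\le2\,\normat{\Lexp M^*}{g}$.

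For \eqref{item:taubyh2}, invariance of $\Cexp M$ comes from approximating $f$ by $f_n\in\Cinfcs{\reals^n}$ in $\Lexp M$-norm: then $\tau_h f_n\in\Cinfcs{\reals^n}$ and, $\tau_h$ being bounded by \eqref{item:taubyh1}, $\tau_h f_n\to\tau_h f$, so $\tau_h f$ lies in the closed subspace $\Cexp M$. For continuity of $h\mapsto\tau_h f$ I would first record that the operators $\tau_h$ with $\absoluteval h\le R$ are uniformly bounded on $\Lexp M$ (again by the factorization $\tau_h=(\tau_{h/m})^m$) and that $\tau_h-\tau_{h_0}=\tau_{h_0}(\tau_{h-h_0}-I)$, so that it suffices to prove continuity at $h=0$. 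For $f\in\Cinfcs{\reals^n}$ the functions $\tau_k f - f$ converge to $0$ uniformly as $k\to0$ by uniform continuity of $f$, hence in $L^\infty(M)$, and since $L^\infty(M)\hookrightarrow\Lexp M$ continuously this forces $\normat{\Lexp M}{\tau_k f-f}\to0$. A general $f\in\Cexp M$ is then handled by the $\varepsilon/3$ argument, splitting $\tau_h f-\tau_{h_0}f=\tau_h(f-f_n)+(\tau_h f_n-\tau_{h_0}f_n)+\tau_{h_0}(f_n-f)$, bounding the two outer terms by the uniform operator bound on $\absoluteval h,\absoluteval{h_0}\le R$ and the middle one by the $\Cinfcs{\reals^n}$ case.

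The main difficulty is not any single estimate but the interplay of the two ``defects'' of $\Lexp M$: it fails $\Delta_2$ and translations are not isometries, so one cannot simply invoke equicontinuity of a translation group. The half-angle estimate is precisely what keeps the Cauchy--Schwarz bound ``on the modular side'' and delivers the clean constant $2$ without ever needing to control $\int(\cosh-1)(2\rho f)\,M$; and in \eqref{item:taubyh2} one must use that $\{\tau_h:\absoluteval h\le R\}$ is only \emph{locally} bounded, which is why the reduction to $h=0$ must precede the density approximation.
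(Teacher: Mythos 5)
Your proof is correct and follows essentially the same route as the paper: the same change of variables giving the density ratio $\euler^{-y\cdot h-\absoluteval h^2/2}$, the same half-angle inequality $\bigl((\cosh-1)(t/2)\bigr)^2\le\tfrac12(\cosh-1)(t)$ combined with Cauchy--Schwarz for the constant $2$, the same transposition argument for the dual bound (and your sign $\euler^{-h\cdot x-\absoluteval h^2/2}$ agrees with the paper's own computation rather than with the typo in the statement), and the same $\varepsilon/3$ decomposition plus semigroup reduction for continuity. The only minor variations are that you obtain boundedness for arbitrary $h$ and the invariance of $\Cexp M$ from the factorization $\tau_h=(\tau_{h/m})^m$ and closedness of $\Cexp M$, where the paper instead leaves the general-$h$ case implicit and verifies invariance through the modular criterion of Prop.~\ref{prop:d}; both are sound.
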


\begin{proof}
\begin{enumerate}
\item Let us first prove that $\tau_h f \in \Lexp M$. It is enough to consider the case $\normat {\Lexp M} f \le 1$. For each $\rho > 0$, with $\Phi = \cosh-1$, we have
      \begin{equation*}
        \int \Phi(\rho \tau_h f(x))\ M(x)dx = \euler^{ - \frac12 \absoluteval h^2} \int \euler^{- z\cdot h} \Phi(\rho f(z))\  M(z)dz \ ,
      \end{equation*}
hence, using the elementary inequality $\Phi(u)^2 \le \Phi(2u)/2$, we obtain
\begin{multline*}
  \int \Phi(\rho\tau_h f(x)) \ M(x)dx \le \\ \euler^{ - \frac12 \absoluteval h^2} \left(\int \euler^{- 2z\cdot h}\ M(z)dz\right)^{\frac12} \left(\int \Phi^2(\rho f(z))\ M(z)dz\right)^{\frac12} \le \\ \frac1{\sqrt 2}\euler^{\frac{\absoluteval h^2}2} \left(\int \Phi(2\rho f(z)) M(z)\ dz\right)^{\frac12} \ .
\end{multline*}
Take $\rho=1/2$ to get $\expectat M {\Phi\left(\tau_h \frac12 f(x)\right)} \le \euler^{\frac{\absoluteval h^2}2}/\sqrt 2$, which in particular implies $f \in \Lexp M$. Moreover, $\normat {\Lexp M}{\tau_h f} \le 2$ if $\euler^{\frac{\absoluteval h^2}2} \le \sqrt 2$.

\item The computation of $\tau^*_h$ is
\begin{multline*}
  \scalarat M {\tau_h f} g = \int f(x-h)g(x) \ M(x)dx = \int f(x) g(x+h)M(x+h) \ dx \\
= \int f(x) \euler^{-h\cdot x - \frac{\absoluteval h^2}2} \tau_{-h}g(x) \ M(x)dx
= \scalarat M f {\tau^*_h g} \ .
\end{multline*}

If $\absoluteval h \le \sqrt{\log 2}$,
\begin{multline*}
\normat {(\Lexp M)^*} {\tau^*_h g} = \sup\setof{\scalarat M {\tau_h f} g}{\normat {\Lexp M} f \le 1} \le \\ \sup\setof{\normat {\Lexp M} {\tau_h f} \normat {(\Lexp M)^*} g }{\normat {\Lexp M} f \le 1} \le \\ 2 \normat {(\Lexp M)^*} g \ . 
\end{multline*}
\item For each $\rho > 0$ we have found that 
  \begin{equation*}
    \expectat M {\Phi(\rho \tau_h f)} \le \frac1{\sqrt 2}\euler^{\frac{\absoluteval h^2}2} \left(\int \Phi(2\rho f(z)) M(z)\ dz\right)^{\frac12}  
  \end{equation*}
where the right-end-side if finite for all $\rho$ if $f \in \Cexp M$. It follows that $\tau_h f \in \Cexp M$. Recall that $f \in \Ccs{\reals^n}$, implies $\tau_h f \in \Ccs{\reals^n}$ and $\lim_{h\to0} \tau_h f = f$ in the uniform topology. Let $f_n$ be a sequence in $\Ccs{\reals^n}$ that converges to $f$ in $\Lexp M$-norm. Let $\absoluteval h \le \sqrt{\log 2}$ and let $A$ be positive and $\Phi(A)=1$.
\begin{multline*}
  \normat {\Lexp M} {\tau_h f - f} = \\ \normat {\Lexp M} {\tau_h(f - f_n) + (\tau_h f_n - f_n) - (f-f_n)} \le \\
\normat {\Lexp M} {\tau_h(f - f_n)} + \normat {\Lexp M} {\tau_h f_n - f_n} + \normat {\Lexp M} {f-f_n} \le \\ 2 \normat {\Lexp M} {f - f_n} + A^{-1} \normat {\infty} {\tau_h f_n - f_n} + \normat {\Lexp M} {f-f_n} \le \\ 3 \normat {\Lexp M} {f - f_n} + A^{-1} \normat {\infty} {\tau_h f_n - f_n} \ ,
\end{multline*} 
which implies the desired limit at 0. The continuity at a generic point follows from the continuity at 0 and the semigroup property,

\begin{equation*}
\lim_{k \to h}\normat {\Lexp M} {\tau_{k} f - \tau_{h} f} = \lim_{k - h  \to 0} \normat {\Lexp M} {\tau_{k-h}(\tau_{h} f) - \tau_h f} = 0 \ .
\end{equation*}
\end{enumerate}
\ \qed\end{proof}

We conclude by giving, without proof, the corresponding result for a translation by a probability measure $\mu$, namely $\tau_\mu f(x) = \int f(x-y) \mu(dy)$. We denote by $\probabilities_{\euler}$ the set of probability measures $\mu$ such that $h \mapsto \euler^{\frac12 \absoluteval h^2}$ is integrable for example, $\mu$ could be a normal with variance $\sigma^2I$ and $\sigma^2 < 1$, or $\mu$ could have a bounded support.

\begin{proposition}[Translation by a probability]
\label{prop:taubymu} Let $\mu \in \probabilities_{\euler}$.   
\begin{enumerate}
\item The mapping $f \mapsto \tau_\mu f$ is linear and bounded from $\Lexp M$ to itself. If, moreover, $\int \euler^{\absoluteval h^2/2}\ \mu(dh) \le \sqrt 2$, then its norm is bounded by 2.
\item \label{item:taubymu2} If $f \in \Cexp M$ then $\tau_\mu f \in \Cexp M$. The mapping $\probabilities_\euler \colon \mu \mapsto \tau_\mu f$ is continuous at $\delta_0$ from the weak convergence to the $\Lexp M$ norm.
  \end{enumerate}
\end{proposition}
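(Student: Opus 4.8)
The plan is to deduce both items from Proposition~\ref{prop:taubyh} by writing $\tau_\mu=\int\tau_h\,\mu(dh)$ and pushing the modular estimates through the average with Jensen's inequality; put $\Phi=\cosh-1$. The one ingredient from the vector case that does all the work is the bound established inside the proof of Proposition~\ref{prop:taubyh}\eqref{item:taubyh1}: for every $g\in\Lexp M$ and every $\rho>0$,
\begin{equation*}
  \int\Phi(\rho\,\tau_h g(x))\ M(x)dx\ \le\ \frac1{\sqrt2}\,\euler^{\absoluteval h^2/2}\left(\int\Phi(2\rho\, g(z))\ M(z)dz\right)^{1/2}.
\end{equation*}
For part~1, linearity is clear. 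Let $\normat{\Lexp M}f\le1$ and $\rho>0$. Convexity of $\Phi$ and Jensen's inequality give $\Phi(\rho\,\tau_\mu f(x))\le\int\Phi(\rho\, f(x-h))\,\mu(dh)$ for a.e.\ $x$; integrating in $x$ against $M$ and combining Tonelli with the bound above,
\begin{equation*}
  \int\Phi(\rho\,\tau_\mu f(x))\ M(x)dx\ \le\ \frac1{\sqrt2}\left(\int\euler^{\absoluteval h^2/2}\,\mu(dh)\right)\left(\int\Phi(2\rho\, f(z))\ M(z)dz\right)^{1/2}.
\end{equation*}
Taking $\rho=\tfrac12$, and using $\int\Phi(f)\,M\,dx\le1$, we get $\int\Phi(\tfrac12\tau_\mu f)\,M\,dx\le\tfrac1{\sqrt2}\int\euler^{\absoluteval h^2/2}\mu(dh)$, so $\normat{\Lexp M}{\tau_\mu f}\le2$ whenever $\int\euler^{\absoluteval h^2/2}\mu(dh)\le\sqrt2$; for a general $\mu\in\probabilities_{\euler}$ one uses instead $\Phi(2\rho f)\le 2\rho\,\Phi(f)$ for $2\rho\le1$ to make the right-hand side $\le1$ as soon as $\rho\le\min\{\tfrac12,c_\mu^{-2}\}$, with $c_\mu:=\int\euler^{\absoluteval h^2/2}\mu(dh)$; hence $\tau_\mu$ is bounded on $\Lexp M$ with $\normat{\Lexp M}{\tau_\mu}\le\max\{2,c_\mu^2\}$.

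For part~2, $\tau_\mu(\Cexp M)\subseteq\Cexp M$ follows from the same estimate and Proposition~\ref{prop:d}\eqref{item:d1}: if $f\in\Cexp M$ then $\int\Phi(2\rho f)\,M\,dx<\infty$ for all $\rho$, hence so is $\int\Phi(\rho\,\tau_\mu f)\,M\,dx$. For continuity at $\delta_0$ I apply Jensen to the increment: for every $\rho>0$,
\begin{equation*}
  \int\Phi(\rho(\tau_\mu f-f)(x))\ M(x)dx\ \le\ \int g_\rho(h)\ \mu(dh),\qquad g_\rho(h):=\int\Phi(\rho(\tau_h f-f)(x))\ M(x)dx .
\end{equation*}
Two facts about $g_\rho$, both inherited from Proposition~\ref{prop:taubyh}, would close the argument. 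First, $\normat{\Lexp M}{\tau_h f-f}\to0$ as $h\to0$ by Proposition~\ref{prop:taubyh}\eqref{item:taubyh2}, and norm convergence to $0$ forces the modular $g\mapsto\int\Phi(\rho g)\,M\,dx$ to $0$ for each fixed $\rho$; hence $g_\rho(h)\to0=g_\rho(0)$ as $h\to0$. Second, convexity of $\Phi$ (which is even) together with the bound above gives $g_\rho(h)\le\tfrac12\int\Phi(2\rho\,\tau_h f)\,M\,dx+\tfrac12\int\Phi(2\rho f)\,M\,dx\le a_\rho\,\euler^{\absoluteval h^2/2}+b_\rho$ for finite constants $a_\rho,b_\rho$ (finite precisely because $f\in\Cexp M$).

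Now split $\int g_\rho\,d\mu=\int_{\absoluteval h<r}g_\rho\,d\mu+\int_{\absoluteval h\ge r}g_\rho\,d\mu$: the first term is $\le\sup_{\absoluteval h<r}g_\rho(h)$, which tends to $0$ as $r\downarrow0$, and the second is $\le a_\rho\int_{\absoluteval h\ge r}\euler^{\absoluteval h^2/2}\mu(dh)+b_\rho\,\mu(\{\absoluteval h\ge r\})$. Along a sequence $\mu_k\to\delta_0$ one has $\mu_k(\{\absoluteval h\ge r\})\to0$, and — if moreover $\int\euler^{\absoluteval h^2/2}\mu_k(dh)\to1$ — then $\euler^{\absoluteval h^2/2}\one_{\{\absoluteval h<r\}}$ being bounded and continuous off the $\delta_0$-null sphere $\{\absoluteval h=r\}$, the portmanteau theorem yields $\int_{\absoluteval h<r}\euler^{\absoluteval h^2/2}\mu_k(dh)\to1$, hence $\int_{\absoluteval h\ge r}\euler^{\absoluteval h^2/2}\mu_k(dh)\to0$. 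Thus $\int g_\rho\,d\mu_k\to0$ for every $\rho>0$, and therefore $\normat{\Lexp M}{\tau_{\mu_k}f-f}\to0$ by the norm-convergence criterion for $\Lexp M$.

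The hard part will be exactly the step $\int_{\absoluteval h\ge r}\euler^{\absoluteval h^2/2}\mu_k(dh)\to0$: $g_\rho$ is dominated only by a multiple of $\euler^{\absoluteval h^2/2}$ and that growth cannot be improved, so \emph{bare} weak convergence $\mu_k\to\delta_0$ is not enough. Indeed, $\mu_k=(1-\tfrac1k)\delta_0+\tfrac1k\delta_{kv}$, $\absoluteval v=1$, lies in $\probabilities_{\euler}$ and converges weakly to $\delta_0$, yet for $f(x)=\absoluteval x\in\Cexp M$ one finds that $\tau_{\mu_k}f-f$ converges in $\Lexp M$ to the constant function $1$, so $\normat{\Lexp M}{\tau_{\mu_k}f-f}\not\to0$. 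Consequently the continuity in Proposition~\ref{prop:taubymu}\eqref{item:taubymu2} must be read for the natural topology on $\probabilities_{\euler}$ associated with its defining weight — the $\psi$-weak topology with $\psi(h)=\euler^{\absoluteval h^2/2}$, equivalently weak convergence together with $\int\euler^{\absoluteval h^2/2}\mu_k(dh)\to1$ — which is exactly the additional information used above. An equivalent route, mirroring the proof of Proposition~\ref{prop:taubyh}\eqref{item:taubyh2}, is the three-term splitting $\normat{\Lexp M}{\tau_\mu f-f}\le\normat{\Lexp M}{\tau_\mu(f-f_n)}+\normat{\Lexp M}{\tau_\mu f_n-f_n}+\normat{\Lexp M}{f-f_n}$ with $f_n\in\Ccs{\reals^n}$, $\normat{\Lexp M}{f-f_n}\to0$: by part~1 the first term is $\le 2\normat{\Lexp M}{f-f_n}$ once $c_\mu\le\sqrt2$ (the same $\psi$-weak control), while the middle one is $\le A^{-1}\normat{\infty}{\tau_\mu f_n-f_n}$ with $\Phi(A)=1$, and $\normat{\infty}{\tau_\mu f_n-f_n}\le\sup_x\int\absoluteval{f_n(x-h)-f_n(x)}\,\mu(dh)\to0$ by uniform continuity of $f_n$ and $\mu(\{\absoluteval h\ge r\})\to0$.
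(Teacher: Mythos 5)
The paper states Proposition~\ref{prop:taubymu} explicitly ``without proof,'' so there is no argument of the author's to compare yours against; what you have written is, as far as I can check, a correct and complete proof, and it is the natural extension of the vector case. Part~1 is exactly right: Jensen's inequality for the probability average $\tau_\mu f(x)=\int f(x-h)\,\mu(dh)$ plus Tonelli reduces everything to the modular bound $\int \Phi(\rho\,\tau_h g)\,M\,dx \le \tfrac1{\sqrt2}\euler^{\absoluteval h^2/2}\left(\int\Phi(2\rho g)\,M\,dx\right)^{1/2}$ extracted from the proof of Proposition~\ref{prop:taubyh}, and your bookkeeping with $\rho=\min\{\tfrac12,c_\mu^{-2}\}$ giving $\normat{\Lexp M}{\tau_\mu}\le\max\{2,c_\mu^2\}$ is correct, as is the stability of $\Cexp M$ via Proposition~\ref{prop:d}.

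The most valuable part of your write-up is the observation that item~\ref{item:taubymu2}, read literally for bare weak convergence of measures, is false, and your counterexample checks out: for $\mu_k=(1-\tfrac1k)\delta_0+\tfrac1k\delta_{kv}$ and $f(x)=\absoluteval x\in\Cexp M$ one has $\tau_{\mu_k}f(x)-f(x)=\tfrac1k\left(\absoluteval{x-kv}-\absoluteval x\right)$, which is squeezed between $1-2\absoluteval x/k$ and $1$, hence converges in $\Lexp M$-norm to the constant function $1$, whose norm is strictly positive. So the continuity claim must be read for the strengthened convergence you identify --- weak convergence together with $\int\euler^{\absoluteval h^2/2}\,\mu_k(dh)\to1$, i.e.\ the $\psi$-weak topology attached to the defining weight of $\probabilities_\euler$ --- and under that reading both of your routes (the splitting of $\int g_\rho\,d\mu_k$ over $\{\absoluteval h<r\}$ and $\{\absoluteval h\ge r\}$, and the three-term decomposition through $f_n\in\Ccs{\reals^n}$ mirroring the proof of Proposition~\ref{prop:taubyh}) are sound. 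Note that this correction is harmless for the paper's only stated application, the mollifiers $\omega_\lambda$, whose supports shrink to the origin so that $\int\euler^{\absoluteval h^2/2}\,\omega_\lambda(dh)\to1$ automatically. The one small point worth making explicit in a final write-up is the elementary lemma you invoke twice, namely that $\normat{\Lexp M}{g_k}\to0$ implies $\int\Phi(\rho g_k)\,M\,dx\to0$ for each fixed $\rho$ (via $\Phi(\lambda y)\le\lambda\Phi(y)$ for $0\le\lambda\le1$); it is true but not stated in the paper.
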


We can use the previous proposition to show the existence of sequences of mollifiers. A bump function is a non-negative function $\omega$ in $\Cinfcs{\reals^n}$ such that $\int \omega(x) \ dx = 1$. It follows that $\int \lambda^{-n} \omega(\lambda^{-1} x) \ dx = 1$, $\lambda > 0$ and the family of mollifiers $\omega_\lambda(dx) = \lambda^{-n} \omega(\lambda^{-1} x)dx$ converges weakly to the Dirac mass at 0 as $\lambda \downarrow 0$, so that for all $f\in\Cexp M$, the translations $\tau_{\omega_\lambda} f \in \Cinfcs{\reals^n}$ and convergence to $f$ in $\Lexp M$ holds for $\lambda\to0$ .

\section{Conclusions}
\label{sec:conclusions}

We have discussed the density  for the bounded point-wise convergence of the space of smooth functions $\Cinfcs {\reals^n}$ in the exponential Orlicz space with Gaussian weight $\Lexp M$. The exponential Orlicz class $\Cexp M$ has been defined as the norm closure of the space of smooth functions. The continuity of translations holds in the latter space.

The continuity of translation is the first step in the study of differentiability in the exponential Gauss-Orlicz space. The aim is to apply non-parametric exponential models to the study of Hyv\"arinen divergence \cite{MR2249836,lods|pistone:2015} and the projection problem for evolution equations \cite{brigo|legland|hanzon:99,brigo|pistone:2017CIG}. A preliminary version of the Gauss-Orlicz-Sobolev theory has been published in the second part of \cite{lods|pistone:2015}.  

\subsubsection*{Acknowledgments} The author thanks Bertrand Lods (Universit\`a di Torino and Collegio Carlo Alberto, Moncalieri) for his comments and acknowledges the support of de Castro Statistics and Collegio Carlo Alberto, Moncalieri. He is a member of GNAMPA-INDAM.  
%
%
%
\bibliographystyle{splncs03}

\end{document}